\newtheorem{thm}{Theorem}
\newtheorem{prop}{Proposition}
\newtheorem{exam}{Example}
\newtheorem{cor}{Corollary}
\newtheorem{defi}{Definition}
\numberwithin{equation}{section}
\newcommand{\abs}[1]{\lvert#1\rvert}
\DeclareMathOperator{\unc}{\xrightarrow[]{un}}
\DeclareMathOperator{\uoc}{\xrightarrow[]{uo}}
\DeclareMathOperator{\oc}{\xrightarrow[]{o}}
\DeclareMathOperator{\nc}{\xrightarrow[]{n}}
\DeclareMathOperator{\co}{{\textrm{co}}}
\DeclareMathOperator{\cl}{{\textrm{cl}}}
\begin{document}
	
\title{AMS Journal Sample}
\author{E. Y. Emelyanov$^1$, N. Erkur{\c s}un-{\"O}zcan$^2$, S. G. Gorokhova$^3$}
\address{$^{1}$ Department of Mathematics, Middle East Technical University, 06800 Ankara, Turkey.} 
\address{$^{2}$ Department of Mathematics, Hacettepe University, 06800 Ankara, Turkey.} 
\address{$^{3}$ Sobolev Institute of Mathematics, 630090 Novosibirsk, Russia.} 
\email{eduard@metu.edu.tr, erkursun.ozcan@hacettepe.edu.tr, lanagor71@gmail.com}
\subjclass[2010]{}
\subjclass[2010]{46B42}

\date{06.10.2017}

\keywords{Banach lattice, $o$-convergence, $uo$-convergence, $un$-convergence, Koml{\'o}s properties,  Koml{\'o}s sets, space of continuous functions}

\title{Koml{\'o}s properties in Banach lattices}

\begin{abstract}
Several Koml{\'o}s like properties in Banach lattices are investigated. We prove that $C(K)$ fails the $oo$-pre-Koml{\'o}s property,
assuming that the compact Hausdorff space $K$ has a nonempty separable open subset $U$ without isolated points such that every 
$u\in U$ has countable neighborhood base. We prove also that for any infinite dimensional Banach lattice $E$ 
there is an unbounded convex $uo$-pre-Koml{\'o}s set $C\subseteq E_+$ which is not $uo$-Koml{\'o}s.
\end{abstract}
\maketitle

\section{Introduction}

In recent paper \cite{GTX}, the unbounded order convergence in Banach lattices was deeply studied. Among other  
things, this development has lead to study of generalizations of the Koml{\'o}s celebrated theorem \cite{Kom} 
to the Banach lattice setting. The authors of \cite{GTX} did their generalization through $AL$-representations of 
a Banach lattice with a strictly positive order continuous functional, replacing almost everywhere convergence 
by unbounded order convergence. Beside such a natural extension, many questions on generalized Koml{\'o}s properties 
are still requiring an investigation. 

In the present paper, we study Koml{\'o}s properties in more breadth settings, than for the $uo$-convergence. 
In Section 2, we define and investigate several Koml{\'o}s properties 
for different modes of boundedness and convergence in a Banach lattice. Section 3 is completely devoted to 
Koml{\'o}s properties in Banach lattices of continuous functions. Section 4 is dealing with so-called Koml{\'o}s sets. 

As the nature of the Koml{\'o}s theorem is sequential, we restrict ourselves to sequential convergences 
in Banach lattices. For unexplained terminology and notations we refer the reader to \cite{AB2003,AB,DEM,DOT,GTX,KMT}. 
In the present paper, $E$ stands for a real Banach lattice.

\section{Koml{\'o}s like properties in Banach lattices}

Let $x_n$ be a sequence in $E$ and $x\in E$. Recall that:

\begin{enumerate}
\item $x_n$ {\em converges in order} to $x$ (we write $x_n\oc x$), if there is a sequence $y_n$ decreasing to 0 
(we write $y_n\downarrow 0$) with $|x_n-x|\le y_n$ for all $n$;
\item if $x_n$ converges in norm to $x$, we write $x_n\nc x$;
\item $x_n$ is {\em unbounded order convergent} to $x$ (we write $x_n\uoc x$) if 
$\abs{x_n-x}\wedge u\oc 0$ for every $u\in E_+$;
\item if $\abs{x_n-x}\wedge u\nc 0$ for every $u\in E_+$, we write $x_n\unc x$ and say that
$x_n$ is {\em unbounded norm convergent} to $x$.
\end{enumerate}

The main motivation for the present paper is the following classical result \cite{Kom}.

\begin{thm}[Koml{\'o}s]\label{Koml}
Let $E=L_1(\mu)$, where $\mu$ is a probability measure. Then, for every norm 
bounded sequence $x_n$, there is a subsequence $x_{n_k}$ such that the Ces{\'a}ro means 
$\frac{1}{m}\sum\limits_{j=1}^{m}x_{n_{k_j}}$ of any further subsequence $x_{n_{k_j}}$ 
converge almost everywhere to some $x\in X$.
\end{thm}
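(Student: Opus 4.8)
The plan is to combine a truncation argument with the Banach--Saks theorem in $L_2(\mu)$ and a Borel--Cantelli estimate for the discarded large--value parts, organizing every subsequence extraction so that the conclusion survives passage to an arbitrary further subsequence. First I would reduce to the case $x_n\ge 0$ by treating positive and negative parts separately, each again norm bounded in $L_1(\mu)$, and fix $\|x_n\|_1\le C$. For every $k\in\mathbb{N}$ set $x_n^{(k)}=x_n\wedge k$, the truncation at level $k$. Since $\mu$ is a probability measure, $0\le x_n^{(k)}\le k$ forces $x_n^{(k)}\in L_2(\mu)$ with $\|x_n^{(k)}\|_2\le k$, so for each fixed $k$ the sequence $(x_n^{(k)})_n$ is bounded in the Hilbert space $L_2(\mu)$.

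Passing to a subsequence I would arrange that $x_n^{(k)}$ converges weakly in $L_2(\mu)$ to some $\phi_k$ for every $k$ simultaneously, via a diagonal argument over $k$. The heart of the Banach--Saks mechanism is then to thin the subsequence once more so that the cross inner products $\langle x_{n_i}^{(k)}-\phi_k,\,x_{n_j}^{(k)}-\phi_k\rangle_{L_2}$ are not merely small but \emph{summably} small, uniformly in $k$, for $i<j$. This is possible because each difference tends weakly to $0$, so its inner product against finitely many previously chosen vectors can be driven below a prescribed summable bound; the point of demanding summability is that it is inherited by every further subsequence, since any subset of index pairs still has summable cross terms. Expanding $\|\tfrac1m\sum_{j\le m}(x_{n_j}^{(k)}-\phi_k)\|_2^2$ then yields a bound of the form $C_k/m$ plus a summable tail, so along the dyadic times $m=2^p$ the squared Cesàro deviations are summable; term-by-term integration gives a.e. convergence along dyadic times, and a routine maximal estimate controlling the oscillation between consecutive dyadic blocks upgrades this to a.e. convergence $\tfrac1m\sum_{j\le m}x_{n_j}^{(k)}\to\phi_k$ for every $k$ and, by the summability, for every further subsequence.

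For the discarded parts, Chebyshev's inequality gives $\mu(x_n>k)\le C/k$, so in addition to the previous thinning I would let the truncation level grow along the subsequence, choosing $k_j\uparrow\infty$ with $\sum_j\mu(x_{n_j}>k_j)<\infty$; Borel--Cantelli then ensures that at a.e. point $x_{n_j}\wedge k_j=x_{n_j}$ for all large $j$, and summability again keeps this valid along every further subsequence. The family $(\phi_k)$ is increasing with $\|\phi_k\|_1\le C$, so $\phi_k\uparrow x$ for some $x\in L_1(\mu)$ by monotone convergence. Combining $\tfrac1m\sum_{j\le m}x_{n_j}\ge\tfrac1m\sum_{j\le m}(x_{n_j}\wedge k)\to\phi_k$ gives the lower bound $\liminf_m\tfrac1m\sum_{j\le m}x_{n_j}\ge x$ a.e.; the matching upper bound $\limsup_m\tfrac1m\sum_{j\le m}x_{n_j}\le x$ is extracted from the coordinated growth of the levels $k_j$ together with the uniform integrability of the truncations, and yields a.e. convergence to $x$, once more for every further subsequence.

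The main obstacle is twofold. First, organizing the nested extractions so that one fixed subsequence works at all truncation levels at once and, above all, so that the conclusion is inherited by every further subsequence; this hereditary feature is exactly what separates Komlós's theorem from the ordinary Banach--Saks theorem, and the device that makes it go through is to replace plain smallness of the controlling quantities (the cross inner products and the tail measures) by summable smallness, which is stable under restriction to subsets of indices. Second, and more delicate, is the upper bound for the Cesàro means: unlike the lower bound it is not a soft monotonicity consequence and genuinely requires the careful coupling of the truncation levels $k_j$ to the Chebyshev tails. The remaining points — diagonalization over $k$ and the passage from dyadic a.e. convergence to full Cesàro a.e. convergence via a maximal inequality — are routine.
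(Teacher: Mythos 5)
The paper itself offers no proof of this statement: it is quoted as Koml\'os's classical theorem with a citation to \cite{Kom}. So your proposal can only be judged against the known proofs (Koml\'os's original argument, Aldous's exchangeability proof, or the subsequence-splitting route), not against anything in this paper.

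Your architecture --- reduction to $x_n\ge 0$, truncation, diagonal extraction of weak $L_2$ limits $\phi_k$, Banach--Saks with \emph{summably} small cross terms to make fixed-level Ces\`aro a.e.\ convergence hereditary, Borel--Cantelli for the parts above growing levels, and $\phi_k\uparrow x$ with the soft lower bound $\liminf_m\frac{1}{m}\sum_{j\le m}x_{n_j}\ge x$ --- is the right skeleton and matches the classical truncation approach. The gap is exactly at the step you flag as delicate, and your proposed mechanism for it cannot be implemented. For Borel--Cantelli you need $\sum_j\mu(x_{n_j}>k_j)<\infty$; since a norm-bounded sequence in $L_1$ need not be uniformly integrable, and non-UI is inherited by \emph{every} subsequence (take $x_n=n\,\mathbb{1}_{[0,1/n]}$), the only bound your scheme invokes is Markov's $\mu(x_{n_j}>k_j)\le C/k_j$, which forces $\sum_j 1/k_j<\infty$, i.e.\ superlinear levels. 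But then, by Cauchy--Schwarz, $m^2\le\bigl(\sum_{j\le m}k_j\bigr)\bigl(\sum_{j\le m}1/k_j\bigr)$, so $\frac{1}{m^2}\sum_{j\le m}k_j$ is bounded \emph{below} by a positive constant, while the only available second-moment bound for the varying-level truncations is $\|x_{n_j}^{(k_j)}\|_2^2\le Ck_j$. Hence the diagonal terms in the expansion of $\|\frac{1}{m}\sum_{j\le m}(x_{n_j}^{(k_j)}-\cdots)\|_2^2$ are no longer $o(1)$: the entire $L_2$/Banach--Saks mechanism, which is what produces your a.e.\ convergence, is lost precisely for the sequence you need it for. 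The phrase ``uniform integrability of the truncations'' cannot rescue this: truncations at a fixed level are bounded, hence trivially UI, which says nothing; what would be needed is UI of the $x_n$ themselves, which may fail hereditarily. Nor can expectations finish the job: the excess $\frac{1}{m}\sum_{j\le m}(x_{n_j}^{(k_j)}-x_{n_j}^{(k)})$ has expectation tending to the ``escaping mass'', which is strictly positive exactly when UI fails, and Fatou's lemma runs the wrong way for a $\limsup$.

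The missing idea --- the actual heart of Koml\'os's proof --- is a stabilization device that lets the levels grow \emph{linearly} while keeping both sums finite. Koml\'os extracts (Helly selection) so that the distribution functions of the $x_n$ converge to a limit law $F$ with $\int_0^\infty(1-F(t))\,dt\le C$; then, choosing $n_j$ fast, $\mu(x_{n_j}>j)$ is within $2^{-j}$ of $1-F(j)$, and $\sum_j(1-F(j))<\infty$ because $F$ has finite first moment; likewise $\sum_j j^{-2}\mathbb{E}\bigl[(x_{n_j}\wedge j)^2\bigr]<\infty$ by the classical Kolmogorov truncation estimate applied to $F$. With levels $k_j=j$ both the Borel--Cantelli sum and the variance sum converge, and an SLLN/Kronecker-type argument (with your weak-convergence near-orthogonality standing in for independence) closes the upper bound. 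Equivalent substitutes exist --- Aldous's almost-exchangeable subsequence plus de Finetti, or the subsequence splitting lemma, which splits by \emph{support}, $v_j=x_{n_j}\mathbb{1}_{\{x_{n_j}>c_j\}}$ (summable support measures, UI remainder), rather than by height as in your $(x_{n_j}-k_j)^+$. Without one of these devices your proof does not close, and the two constraints you impose on the $k_j$ are mutually exclusive.
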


The Koml{\'o}s theorem has initiated many investigations and was extended recently for Banach lattices 
\cite{GTX} by replacing a.e.-convergence with $uo$-conver\-gence. This development has inspired 
the following definition.  

\begin{defi}\label{Komprop1}
A Banach lattices $E$ is said to have $ab$-{\em Koml{\'o}s property} $($respectively, $ab$-{\em pre-Koml{\'o}s property}$)$ 
if, for every $a$-bounded sequence $x_n$ in $E$, there exist a subsequence $x_{n_k}$ and an element $x\in E$ such that 
$$
  x=b-\lim\limits_{m\to\infty}\frac{1}{m}\sum\limits_{j=1}^{m}x_{n_{k_j}}
$$ 
for any subsequence $x_{n_{k_j}}$ of $x_{n_k}$  $($respectively, the sequence $\frac{1}{m}\sum\limits_{j=1}^{m}x_{n_{k_j}}$ 
is $b$-Cauchy for any subsequence $x_{n_{k_j}}$ of $x_{n_k}$$)$.

Here, $a$-boundedness stands for $o$- or $n$-boundedness; and $b$-convergence stands for $o$-, $uo$-, $n$-, or $un$-convergence.
\end{defi}

Clearly, it suffices to check $ab$-Koml{\'o}s and $ab$-pre-Koml{\'o}s properties only for sequences in $E_+$.
Furthermore, $no$-Koml{\'o}s implies $oo$-Koml{\'o}s and, if $E$ is $\sigma$-Dedekind complete, 
they coincide with $no$-pre-Koml{\'o}s and $oo$-pre-Koml{\'o}s properties respectively. 

$nuo$-Koml{\'o}s and $nuo$-pre-Koml{\'o}s properties were introduced in \cite[Def.5.1]{GTX} under the names of 
Koml{\'o}s and pre-Koml{\'o}s properties respectively. The Koml{\'o}s Theorem \ref{Koml} has been extended further 
in \cite[Prop.5.13]{GTX} as follows.

\begin{prop}[Gao--Troitsky--Xanthos]\label{GTXProp.5.13}
Let $E$ be a regular sublattice of an order continuous Banach lattice $F$. Then $E$ has the $nuo$-pre-Koml{\'o}s property. 
Moreover, $E$ has the $nuo$-Koml{\'o}s property iff it is sequentially boundedly uo-complete.
\end{prop}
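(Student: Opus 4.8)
The plan is to reduce the abstract Banach-lattice statement to the classical Komlós Theorem \ref{Koml} by means of an $L_1$-representation, transferring $uo$-convergence along a chain of regular sublattices. First I would use that $uo$-convergence passes freely between a Banach lattice and any of its regular sublattices (as in \cite{GTX}): for sequences drawn from $E$ the notions of $uo$-convergence in $E$ and in $F$ coincide, and norm-boundedness is inherited. So it suffices to work in the order continuous lattice $F$, and there to localize. Given a norm-bounded $(x_n)\subseteq E_+$, let $B$ be the band it generates in $F$. Since $F$ is order continuous, $B$ is a projection band, hence again a regular sublattice, and $e:=\sum_n 2^{-n}|x_n|/(1+\|x_n\|)$ is a norm-convergent sum lying in $B$ which is a weak order unit of $B$. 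Thus $B$ is order continuous with a weak unit, and I would invoke the classical representation theorem: $B$ is lattice isomorphic to an order dense ideal of some $L_1(\mu)$ with $\mu$ a probability measure and the inclusion $B\hookrightarrow L_1(\mu)$ continuous. Being an order dense ideal, $B$ is regular in $L_1(\mu)$, where $uo$-convergence of a sequence is exactly its a.e.-convergence; chaining the regular embeddings, a sequence in $B$ is $uo$-null in $E$ iff it is a.e.-null in $L_1(\mu)$.

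With this dictionary the pre-Komlós assertion follows directly. By continuity of the inclusion, $(x_n)$ is norm-bounded in $L_1(\mu)$, so Theorem \ref{Koml} yields a subsequence $(x_{n_k})$ and an element $x\in L_1(\mu)$ such that the Cesàro means of \emph{every} further subsequence converge a.e., and to the \emph{same} limit $x$. Translating back along the chain, the means $\frac{1}{m}\sum_{j=1}^{m}x_{n_{k_j}}$ are $uo$-convergent, in particular $uo$-Cauchy, in $E$, which is precisely the $nuo$-pre-Komlós property.

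For the equivalence I would argue both directions through the same limit $x$. If $E$ is sequentially boundedly $uo$-complete, the Cesàro means above are norm-bounded and $uo$-Cauchy in $E$, hence $uo$-converge to some element of $E$; by uniqueness of $uo$-limits this element represents $x$, so $x\in E$ and the $nuo$-Komlós property holds. Conversely, assume the $nuo$-Komlós property and let $(x_n)$ be norm-bounded and $uo$-Cauchy in $E$. In the representation it is a.e.-Cauchy, hence a.e.-convergent to some $g$, and the Cesàro means of an a.e.-convergent sequence share the same a.e.-limit. Applying the Komlós subsequence supplied by the property, the means converge to a point $x\in E$, forcing $x=g$; therefore $x_n\uoc x$ with $x\in E$, so $E$ is sequentially boundedly $uo$-complete.

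The main obstacle, and the technical heart of the argument, is the middle step: confirming that the band $B$ is genuinely regular in $F$ and carries a weak unit, invoking the $L_1(\mu)$-representation of an order continuous Banach lattice with a weak unit, and verifying that $uo$-convergence survives every link of the chain $E\hookrightarrow F\hookleftarrow B\hookrightarrow L_1(\mu)$ and there coincides with a.e.-convergence. Once this transfer is secured, the appeals to Theorem \ref{Koml} and the elementary Cesàro-limit computations are routine.
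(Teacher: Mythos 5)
A preliminary remark: the paper offers no proof of this proposition at all --- it is imported verbatim from \cite[Prop.5.13]{GTX}, and the original argument there runs along exactly the lines you propose: reduce to a band with a weak unit inside the order continuous lattice, represent that band as an order dense ideal of some $L_1(\mu)$, transfer $uo$-notions along regular sublattices, and invoke the classical Koml{\'o}s Theorem \ref{Koml}. So your strategy is the intended one, and both directions of the ``moreover'' equivalence are handled correctly; in particular, you rightly observe there that the Koml{\'o}s limit lands in $E$ only once sequential bounded $uo$-completeness is assumed.

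One sentence, however, is wrong as written and should be repaired, since taken literally it would make the second half of the proposition vacuous. In the pre-Koml{\'o}s step you assert that, ``translating back along the chain, the means are $uo$-convergent, in particular $uo$-Cauchy, in $E$.'' The limit $x$ produced by Theorem \ref{Koml} lives in $L_1(\mu)$ and need not belong to the image of $E$, nor even to that of the band $B$, which is only an order dense ideal of $L_1(\mu)$; consequently the Ces{\'a}ro means need not be $uo$-convergent \emph{in} $E$ at all. What does transfer along the chain $E\subseteq F\supseteq B\subseteq L_1(\mu)$ is the $uo$-Cauchy property of sequences (the \cite[3.12]{GTX}-type transfer for regular sublattices, used also in the paper's proof of Theorem \ref{Komset3}), and that alone is what yields the $nuo$-pre-Koml{\'o}s property. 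Were $uo$-convergence in $E$ to transfer as you state, every regular sublattice of an order continuous Banach lattice would automatically have the $nuo$-Koml{\'o}s property, contradicting the ``moreover'' clause: take $E=F=c_0$, which is order continuous but not a $KB$-space, hence not $nuo$-Koml{\'o}s by Corollary \ref{GTXCor.5.14}. The fix is one line, and your own careful treatment of the ``moreover'' part shows you have the tools for it: the means $uo$-converge in $L_1(\mu)$, hence are $uo$-Cauchy in $L_1(\mu)$, hence $uo$-Cauchy in $B$, then in $F$, and finally in $E$.
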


Let us mention also the next corollary (see \cite[Cor.5.14]{GTX}) of \cite[Prop.5.13]{GTX}. 

\begin{cor}[Gao--Troitsky--Xanthos]\label{GTXCor.5.14}
Every order continuous Banach lattice $E$ has the $nuo$-pre-Koml{\'o}s property. Moreover,
$E$ has the $nuo$-Koml{\'o}s property iff it is a $KB$-space.
\end{cor}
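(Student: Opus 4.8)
The plan is to deduce both assertions from Proposition \ref{GTXProp.5.13} by taking $F=E$: an order continuous Banach lattice is a regular sublattice of itself (the identity embedding preserves all suprema and infima), so the $nuo$-pre-Koml\'os property is immediate, and by the same proposition $E$ has the $nuo$-Koml\'os property if and only if it is sequentially boundedly $uo$-complete. Everything therefore reduces to the purely lattice-theoretic equivalence, for order continuous $E$,
\[
  E \text{ is sequentially boundedly } uo\text{-complete} \iff E \text{ is a } KB\text{-space.}
\]
Here $E$ being sequentially boundedly $uo$-complete means that every norm-bounded $uo$-Cauchy sequence is $uo$-convergent, and I will repeatedly use that order continuity forces $E$ to be Dedekind complete.

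First I would prove the forward implication. Assume $E$ is a $KB$-space. Passing to positive sequences is harmless: if $(x_n)$ is norm-bounded and $uo$-Cauchy then so are $(x_n^+)$ and $(x_n^-)$, since $\abs{x_m^+-x_n^+}\wedge u\le\abs{x_m-x_n}\wedge u$. So let $0\le x_n$ be norm-bounded and $uo$-Cauchy. By Dedekind completeness the infima $y_n:=\inf_{k\ge n}x_k$ exist; they increase and stay norm-bounded, so the $KB$ (Levi) property produces $y:=\sup_n y_n$ with $y_n\uparrow y$, whence $y_n\oc y$ and thus $y_n\uoc y$. Now $x_n-y_n=\sup_{k\ge n}(x_n-x_k)$, and using the infinite distributive law together with the Dedekind-complete reformulation of the $uo$-Cauchy condition (for each $u\in E_+$ one has $p_n:=\sup_{m,k\ge n}(\abs{x_m-x_k}\wedge u)\downarrow 0$) I obtain
\[
  0\le (x_n-y_n)\wedge u=\sup_{k\ge n}\big((x_n-x_k)\wedge u\big)\le p_n\downarrow 0,
\]
so $x_n-y_n\uoc 0$ and hence $x_n\uoc y$.

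For the converse I would argue by contraposition. If $E$ is order continuous but not a $KB$-space, then by the standard characterization of $KB$-spaces (see, e.g., \cite{AB}) it contains a positive disjoint sequence $(u_k)$ with $\inf_k\|u_k\|=:\delta>0$ and $\sup_N\|\sum_{k=1}^N u_k\|<\infty$. Set $s_n=\sum_{k=1}^n u_k$; this is increasing and norm-bounded. Since disjoint sequences are $uo$-null \cite{GTX}, $u_k\wedge u\oc 0$ for every $u\in E_+$, and disjointness gives $\sum_{k=n+1}^m u_k=\bigvee_{n<k\le m}u_k$, so
\[
  (s_m-s_n)\wedge u=\bigvee_{n<k\le m}(u_k\wedge u)\le \sup_{k>n}(u_k\wedge u)\downarrow 0,
\]
which shows $(s_n)$ is $uo$-Cauchy. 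But $(s_n)$ cannot be $uo$-convergent: being increasing, any $uo$-limit would be its supremum, and order continuity would then force norm convergence, contradicting $\|s_{n+1}-s_n\|=\|u_{n+1}\|\ge\delta>0$. Hence $E$ is not sequentially boundedly $uo$-complete, completing the equivalence and thus the corollary.

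The main obstacle is the forward direction. For a genuinely non-monotone bounded $uo$-Cauchy sequence the candidate $uo$-limit is not handed to us, and the crux is to manufacture it from the monotone sequence $y_n=\inf_{k\ge n}x_k$; this is precisely where Dedekind completeness (for the existence of each $y_n$) and the $KB$/Levi property (for the existence of $y=\sup_n y_n$) are indispensable, while the estimate $(x_n-y_n)\wedge u\le p_n$ does the real work of transferring the Cauchy information to the limit. By contrast, the converse only requires a single explicit $c_0$-type witness, so it is the softer half.
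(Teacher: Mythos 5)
Your proof is correct and follows exactly the route the paper intends: the paper states this corollary without proof, quoting it from \cite{GTX} as a consequence of Proposition \ref{GTXProp.5.13}, which is precisely your specialization $F=E$ (every order continuous Banach lattice being a regular sublattice of itself). The one ingredient the paper delegates entirely to the citation---the equivalence, for order continuous $E$, of sequential bounded $uo$-completeness with the $KB$ property---you supply in full, and both halves check out: the forward direction via $y_n=\inf_{k\ge n}x_k$, the Levi property, and the estimate $(x_n-y_n)\wedge u\le p_n$, and the converse via the disjoint $c_0$-type sequence whose partial sums are norm bounded, $uo$-Cauchy, but not $uo$-convergent.
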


\begin{prop}\label{un-cor}
Every $o$-continuous Banach lattice $E$ has the $nun$-pre-Koml{\'o}s property. Moreover,
if $E$ is a $KB$-space then $E$ has the $nun$-Koml{\'o}s property.
\end{prop}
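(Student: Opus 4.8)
The plan is to obtain this as an essentially immediate consequence of Corollary \ref{GTXCor.5.14}, by exploiting the fact that in an $o$-continuous Banach lattice $uo$-convergence is stronger than $un$-convergence. First I would record the elementary fact that underlies the whole argument: in an $o$-continuous $E$, every order-null net is norm-null. Indeed, if $z_\alpha\oc 0$ then $\abs{z_\alpha}\le w_\alpha$ for some net $w_\alpha\downarrow 0$, and $o$-continuity gives $\|z_\alpha\|\le\|w_\alpha\|\to 0$. Applying this to the nets $\abs{x_n-x}\wedge u$ and to the double-indexed nets $\abs{s_m-s_l}\wedge u$ shows that in an $o$-continuous lattice $x_n\uoc x$ implies $x_n\unc x$, and likewise every $uo$-Cauchy sequence is $un$-Cauchy.

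For the pre-Koml\'{o}s part I would argue as follows. Let $E$ be $o$-continuous and let $x_n$ be a norm-bounded sequence in $E$. By Corollary \ref{GTXCor.5.14}, $E$ has the $nuo$-pre-Koml\'{o}s property, so there is a subsequence $x_{n_k}$ such that, for every further subsequence $x_{n_{k_j}}$, the Ces\`aro means $s_m=\frac{1}{m}\sum_{j=1}^{m}x_{n_{k_j}}$ form a $uo$-Cauchy sequence. By the observation above these means are then $un$-Cauchy, which is precisely the $nun$-pre-Koml\'{o}s property.

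For the Koml\'{o}s part, suppose $E$ is a $KB$-space; then $E$ is $o$-continuous, and by Corollary \ref{GTXCor.5.14} it has the $nuo$-Koml\'{o}s property. Thus for any norm-bounded $x_n$ there are a subsequence $x_{n_k}$ and $x\in E$ with $s_m\uoc x$ for the Ces\`aro means of every further subsequence. Since $uo$-convergence implies $un$-convergence in the $o$-continuous space $E$, we get $s_m\unc x$, so $E$ has the $nun$-Koml\'{o}s property.

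The result is therefore really a corollary of \cite{GTX}, and the only thing demanding care is the transfer step: verifying the net-level implication $\oc\Rightarrow\nc$ and, in particular, that it passes to the double-indexed formulation so that $uo$-Cauchy upgrades to $un$-Cauchy. I expect this bookkeeping around the Cauchy notion — rather than any genuinely new estimate — to be the main (and only mild) obstacle; everything else is a direct citation of Corollary \ref{GTXCor.5.14} together with $o$-continuity (which a $KB$-space enjoys automatically).
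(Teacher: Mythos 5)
Your proposal is correct and follows essentially the same route as the paper: the paper's proof is a one-liner deducing the result from Corollary \ref{GTXCor.5.14} together with the fact that $uo$-convergence implies $un$-convergence in $o$-continuous Banach lattices (cited there as \cite[Prop.2.5]{DOT}). The only difference is that you verify this implication (and its Cauchy version) by hand via domination and order continuity of the norm, which is a harmless, and indeed slightly more self-contained, substitute for the citation.
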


\begin{proof}
It follows from Corollary \ref{GTXCor.5.14} since $uo$-convergence implies $un$-conver\-gence (see \cite[Prop.2.5]{DOT}).
\end{proof}

\begin{exam}\label{Komprop2}
The Banach lattice $c$ of real convergent sequences fails the $oo$-Koml{\'o}s $($and hence $no$-Koml{\'o}s$)$ property. 
To see this, take the sequence
$x_n=\sum\limits_{j=1}^{n}e_{2j}$ in $[0,\mathbb{1}]\subset c$. Clearly, for any subsequence $x_{n_{k_j}}$, the sequence 
$\frac{1}{m}\sum\limits_{j=1}^{m}x_{n_{k_j}}$ is $uo$-divergent, and also $o$-divergent, since it is order bounded. 
It shows also that $c$ fails the $nuo$-Koml{\'o}s property. Since the sequence $\frac{1}{m}\sum\limits_{j=1}^{m}x_{n_{k_j}}$ 
is not $n$-Cauchy, $c$ fails the $nn$-pre-Koml{\'o}s property as well.

It can be easily seen that $c$ has $oo$-, $ouo$-, and $nuo$-pre-Koml{\'o}s property 
$($cf. also \cite[Cor.5.10]{GTX}$)$.
\end{exam}

\begin{prop}\label{Komprop3}
Any $o$-continuous Banach lattice $E$ has $oo$- and $on$-Koml{\'o}s property.
\end{prop}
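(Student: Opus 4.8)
The plan is to feed the Gao--Troitsky--Xanthos extraction of Corollary \ref{GTXCor.5.14} with the given sequence and then to upgrade its conclusion, exploiting that an \emph{order bounded} sequence behaves far better than a merely norm bounded one in an $o$-continuous space. By the reduction recorded after Definition \ref{Komprop1} I may assume the $o$-bounded sequence is positive, say $0\le x_n\le u$ for some $u\in E_+$. I shall establish the $oo$-Koml\'os property; the $on$-Koml\'os property will then come for free, because in an $o$-continuous Banach lattice $y_m\oc y$ forces $\abs{y_m-y}\le p_m\downarrow 0$ and hence $\|y_m-y\|\le\|p_m\|\to 0$, so $y_m\nc y$ with the same limit. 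Thus the very subsequence and element $x$ witnessing $oo$-Koml\'os will also witness $on$-Koml\'os.

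First I would fix the subsequence. Since $E$ is $o$-continuous it enjoys the $nuo$-pre-Koml\'os property by Corollary \ref{GTXCor.5.14}, and an $o$-bounded sequence is automatically $n$-bounded (if $\abs{x_n}\le u$ then $\|x_n\|\le\|u\|$); applying that property yields a subsequence $x_{n_k}$ all of whose further Ces\`aro means are $uo$-Cauchy. Order intervals of an $o$-continuous Banach lattice are weakly compact (\cite{AB}), so $[0,u]$ is weakly compact and, by the Eberlein--\v{S}mulian theorem, I may pass to a further subsequence, still written $x_{n_k}$, with $x_{n_k}\weakly x$ for some $x\in[0,u]$. This refinement costs nothing, since the pre-Koml\'os conclusion holds for \emph{every} further subsequence of $x_{n_k}$ and therefore persists after relabelling.

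Next I would identify the common limit of the Ces\`aro means. Fix any subsequence $x_{n_{k_j}}$ and set $s_m=\frac1m\sum_{j=1}^{m}x_{n_{k_j}}$, so that $0\le s_m\le u$. Evaluating the $uo$-Cauchy condition at $u$ gives $\abs{s_m-s_{m'}}\wedge u\oc 0$, and since $\abs{s_m-s_{m'}}\le u$ this says precisely that $(s_m)$ is $o$-Cauchy. As an $o$-continuous Banach lattice is Dedekind complete (\cite{AB}), the order bounded $o$-Cauchy sequence $(s_m)$ must be $o$-convergent, say $s_m\oc x'$; by the implication from the first paragraph, $s_m\nc x'$ and hence $s_m\weakly x'$. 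On the other hand $x_{n_{k_j}}\weakly x$ as a subsequence of $x_{n_k}$, whence its Ces\`aro means satisfy $s_m\weakly x$. Uniqueness of weak limits forces $x'=x$.

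Consequently the Ces\`aro means of \emph{every} subsequence of $x_{n_k}$ order converge to the single element $x$, which is the $oo$-Koml\'os property, and $on$-Koml\'os follows as explained. The step I expect to be decisive is the middle one: order boundedness is exactly what converts the bare $uo$-Cauchy output of Corollary \ref{GTXCor.5.14} into a genuinely $o$-convergent sequence, while order continuity does double duty, linking $\oc$ to $\weakly$ and furnishing the weak compactness of $[0,u]$ that nails all the subsequential limits to the common value $x$.
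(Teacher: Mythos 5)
Your proof is correct, and its backbone is the same as the paper's: apply Corollary \ref{GTXCor.5.14} to get the $nuo$-pre-Koml\'os property, observe that order boundedness of the Ces\`aro means upgrades $uo$-Cauchy to $o$-Cauchy (since $\abs{s_m-s_{m'}}\wedge u=\abs{s_m-s_{m'}}$ inside $[0,u]$), use Dedekind completeness (a consequence of $o$-continuity) to obtain $o$-limits, and use $o$-continuity once more to convert $o$-convergence into norm convergence. Where you genuinely go beyond the paper is in pinning down the \emph{common} limit: the paper's proof simply asserts that "there exists $y\in E$" serving as the $o$-limit of the Ces\`aro means of \emph{every} further subsequence, without explaining why these limits agree. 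You supply the missing justification by invoking weak compactness of order intervals in an $o$-continuous Banach lattice together with Eberlein--\v{S}mulian to fix a weak limit $x$ of a further subsequence in advance, and then using the chain $o$-convergence $\Rightarrow$ norm convergence $\Rightarrow$ weak convergence, plus the elementary fact that Ces\`aro means of a weakly convergent sequence converge weakly to the same limit, to force every subsequential Ces\`aro $o$-limit to equal $x$. This costs one extra (standard) tool but makes the uniqueness-of-limit step, which the paper glosses over, fully rigorous; an alternative, more elementary route to the same end would be an interleaving argument on pairs of subsequences, but your weak-compactness argument is cleaner.
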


\begin{proof} 
Let $x_n\in [-u,u]$ for all $n$. By \cite[Cor.5.14]{GTX}, $E$ has 
$nuo$-pre-Koml{\'o}s property, and hence $ouo$-pre-Koml{\'o}s property. So, there 
exists a subsequence $x_{n_k}$ such that any sequence $\frac{1}{m}\sum\limits_{j=1}^{m}x_{n_{k_j}}\subset[-u,u]$ 
is $uo$-Cauchy, and hence $o$-Cauchy. By $o$-continuity of the norm, $E$ is Dedekind complete. It follows 
that there exist $y\in E$ with $y=o-\lim\limits_{m\to\infty}\frac{1}{m}\sum\limits_{j=1}^{m}x_{n_{k_j}}$
for any subsequence $x_{n_{k_j}}$ of $x_{n_k}$. Using $o$-continuity once more, we get
$y=n-\lim\limits_{m\to\infty}\frac{1}{m}\sum\limits_{j=1}^{m}x_{n_{k_j}}$.
\end{proof}

\section{Koml{\'o}s properties in Banach lattices of continuous functions}

Notice that, in $E=C(K)$: $oo$-Koml{\'o}s, $no$-Koml{\'o}s, $ouo$-Koml{\'o}s, and $nuo$-Koml{\'o}s properties coincide. 
The same is true for $oo$-, $no$-, $ouo$-, and $nuo$-pre-Koml{\'o}s properties. Furthermore, $nn$-Koml{\'o}s 
property coincides with both $nn$-pre-Koml{\'o}s and $on$-pre-Koml{\'o}s properties.

In view of Example \ref{Komprop2}, the Banach lattice $c\cong C(\mathbb{N}\cup\{\infty\})$ fails the $oo$-Koml{\'o}s property, but still 
has the $oo$-pre-Koml{\'o}s property. We point out that the one-point compactification $\mathbb{N}\cup\{\infty\}$ of $\mathbb{N}$ is a separable 
compact metric space in which all points except $\infty$ are isolated. 

It was mentioned in \cite[Ex.5.3]{GTX} that it was still unknown whether or not $C[0,1]$ has the $nuo$-pre-Koml{\'o}s property. 
Here, we clarify the situation with Banach lattices $C(K)$ for a large class of compact Hausdorff spaces.

\begin{thm}\label{Komprop4}
Let $K$ be a compact Hausdorff space without isolated points in which there exist 
two distinct sequences $t_n$ and $t'_n$ such that $\cl\{t_n\}_{n=1}^{\infty}=\cl\{t'_n\}_{n=1}^{\infty}=K$.
Then $C(K)$ fails the $oo$-pre-Koml{\'o}s property.
\end{thm}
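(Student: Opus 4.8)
The plan is to negate the $oo$-pre-Koml\'os property directly: I will exhibit a single order bounded sequence $(x_n)$ in $[0,\mathbb{1}]\subset C(K)$ such that, given \emph{any} subsequence $(x_{n_k})$, one can pass to a further subsequence $(x_{n_{k_j}})$ whose Ces\`aro means $z_m=\frac1m\sum_{j=1}^{m}x_{n_{k_j}}$ fail to be $o$-Cauchy. Since a sequence is $o$-Cauchy precisely when there is $y_m\downarrow 0$ with $\abs{z_i-z_j}\le y_m$ for all $i,j\ge m$, the first ingredient is a workable description of $y_m\downarrow 0$ in $C(K)$. I would prove, using that every nonempty open subset of a compact Hausdorff space is a Baire space, that for a decreasing sequence $y_m\in C(K)_+$ one has $\inf_m y_m=0$ in $C(K)$ if and only if $\{t:\inf_m y_m(t)=0\}$ is dense in $K$. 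The nontrivial direction is the contrapositive: if $g:=\inf_m y_m$ (taken pointwise) is strictly positive on some nonempty open $V$, then $V=\bigcup_n\{t\in V:g(t)\ge 1/n\}$ exhibits $V$ as a countable union of sets closed in $V$, so by Baire one of them has nonempty interior; on the resulting open set $V'$ every $y_m\ge 1/n$, and an Urysohn bump supported in $V'$ is a nonzero minorant of all $y_m$, contradicting $\inf_m y_m=0$.

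The consequence I will use is the \emph{oscillation criterion}: if $(z_m)$ is order bounded in $C(K)$ and its pointwise oscillation $\limsup_m z_m(t)-\liminf_m z_m(t)$ is bounded below by a constant $c>0$ on some nonempty open set $U$, then $(z_m)$ is not $o$-Cauchy; indeed any dominating $y_m$ would satisfy $\inf_m y_m\ge c$ everywhere on $U$, so $\{\inf_m y_m=0\}$ would miss the open set $U$ and hence not be dense. Because each $z_m$ is continuous, it in turn suffices to steer the further subsequence so that its Ces\`aro means sweep up and down on a \emph{common} open set, i.e. $z_m\ge\tfrac23$ on $U$ for infinitely many $m$ and $z_m\le\tfrac13$ on $U$ for infinitely many $m$; this forces $\limsup\ge\tfrac23$ and $\liminf\le\tfrac13$ at every point of $U$, so the oscillation criterion applies with $c=\tfrac13$. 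The construction of $(x_n)$ is where the hypotheses enter. Using normality of $K$ (Urysohn), the absence of isolated points, and the two dense sequences $t_n$ and $t'_n$, I would build $x_n\in[0,\mathbb{1}]$ that are ``high'' on neighborhoods attached to the points $t_n$ and ``low'' on neighborhoods attached to the points $t'_n$, organized in consecutive blocks; after thinning one passes to disjoint finite point sets so that the Urysohn interpolation is available. The two dense sequences provide, near every location and at every stage, both a spot where the functions can be driven toward $1$ and a spot where they can be driven toward $0$, which is exactly the raw material needed to produce the sweep.

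The main obstacle is robustness of this sweep across \emph{all} subsequences. Ces\`aro averaging is inherently stabilizing, and at any single point every bounded sequence of values has a subsequence with convergent averages; moreover a subsequence handed to me by the adversary may avoid the ``low'' blocks (or the ``high'' blocks) entirely, so the naive block scheme does not by itself survive. The crux is therefore a diagonal/combinatorial selection showing that, no matter which infinite subsequence $(x_{n_k})$ is given, one can still extract a further subsequence realizing a full up-and-down swing on one common nonempty open set; here the density of \emph{both} sequences and the absence of isolated points are essential, since they guarantee that the ``high'' and ``low'' behaviors remain simultaneously available along any infinite index set. The accompanying delicate estimate is quantitative rather than conceptual: one must check that the high and low contributions do not wash out under averaging, i.e. that the running means genuinely reach $\tfrac23$ and $\tfrac13$ on $U$ and are not flattened toward an intermediate value. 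Once the swing is secured, the oscillation criterion yields oscillation $\ge\tfrac13$ on $U$, the Ces\`aro means are not $o$-Cauchy, and therefore $C(K)$ fails the $oo$-pre-Koml\'os property.
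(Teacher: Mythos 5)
Your two preparatory lemmas are fine: the Baire-category characterization of $y_m\downarrow 0$ in $C(K)$ is correct (and proved essentially as you sketch), and the resulting oscillation criterion is a valid sufficient condition for failure of $o$-Cauchyness. The problem is that everything after that is a plan, not a proof. The construction of $(x_n)$ is described only qualitatively (``high'' near the $t_n$, ``low'' near the $t'_n$, in blocks), and the two steps you yourself identify as the crux --- the combinatorial selection that survives an arbitrary adversarial subsequence, and the estimate showing the averages genuinely reach $\tfrac23$ and $\tfrac13$ --- are stated as goals and never carried out. Since subsequence-robustness \emph{is} the entire content of the theorem, this is a genuine gap. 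Worse, the target you chose appears unreachable. If your scheme controls the functions cumulatively (each $x_n$ high at all of $t_1,\dots,t_{k(n)}$ and low at all of $t'_1,\dots,t'_{k(n)}$ with $k(n)\to\infty$), then the Ces\`aro means of \emph{every} further subsequence converge pointwise at each $t_n$ and each $t'_n$, so the oscillation function vanishes on the dense set $\{t_n\}\cup\{t'_n\}$ and therefore cannot be bounded below by $c>0$ on any nonempty open set: your criterion can never fire. If instead the control is block-by-block and not cumulative, the adversary avoids the blocks, which is exactly the failure you concede; already for $K=[0,1]$ a diagonal argument over a countable base produces a subsequence in which, for every basic open $V$, only finitely many members are $\ge\tfrac23$ on $V$ or only finitely many are $\le\tfrac13$ on $V$, destroying the raw material for the sweep.

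The paper's proof shows the right move is to put the high/low dichotomy in the \emph{space} variable, not in the index $m$. Define $f_k$ to equal $1$ at $t_1,\dots,t_k$ and $0$ at $t'_1,\dots,t'_k$ (these finite sets are disjoint because the sequences are distinct and $K$ has no isolated points), and extend continuously with $f_k(K)\subseteq[0,1]$ by Urysohn. The defining conditions are nested, so they are inherited by every subsequence: for any $f_{k_j}$, the Ces\`aro means $g_m=\frac1m\sum_{j=1}^m f_{k_j}$ converge pointwise to $1$ at every $t_n$ and to $0$ at every $t'_n$. Now if $|g_i-g_j|\le y_m$ for all $i,j\ge m$ with $y_m\downarrow 0$ in $C(K)$, letting $i\to\infty$ gives $g_m\ge 1-y_m$ on the dense set $\{t_n\}$ and $g_m\le y_m$ on the dense set $\{t'_n\}$; by continuity both inequalities hold on all of $K$, whence $1\le g_m+y_m\le 2y_m$, i.e. $y_m\ge\frac12\mathbb{1}$ for every $m$, contradicting $y_m\downarrow 0$. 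This needs only density plus continuity --- no Baire category, no open set, no combinatorics --- and it yields the stronger conclusion that the Ces\`aro means of \emph{every} subsequence fail to be $o$-Cauchy. To repair your write-up, replace the block construction by this cumulative one and replace the oscillation criterion by the density argument just given; your Baire lemma can then be discarded.
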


\begin{proof}
Define $f_k(t)$ on $\{t_n\}_{n=1}^{k}\cup\{t'_n\}_{n=1}^{k}$ to be equal to 1 if $t=t_1,...,t_k$ and
$f_k(t)=0$ if $t=t'_1,...,t'_k$. Then extend each $f_k(t)$ continuously to whole $K$ so that 
$f_k(K)\subseteq [0,1]$. It is easy to see that, for any subsequence $f_{k_j}$ the sequence 
$$
  g_m=\frac{1}{m}\sum\limits_{j=1}^{m}f_{k_j} \ \ \ (m\in\mathbb{N})
$$
of Ces{\'a}ro means is not $oo$-Cauchy. 
\end{proof}

\begin{thm}\label{Komprop5}
Let $K$ be a compact Hausdorff space with a nonempty separable open subset $U\subset K$ without isolated points
such that every $u\in U$ has countable neighborhood base. Then $C(K)$ fails the $oo$-pre-Koml{\'o}s property.
\end{thm}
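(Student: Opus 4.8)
The plan is to reduce Theorem \ref{Komprop5} to the situation of Theorem \ref{Komprop4} by constructing, inside the separable open subset $U$, two dense sequences that witness the failure of the $oo$-pre-Koml{\'o}s property. The difficulty compared to Theorem \ref{Komprop4} is that here density is only available on $U$, not on all of $K$, so I cannot directly produce two sequences dense in $K$. Instead I will build functions $f_k$ that oscillate on $U$ and are harmless off $U$, and argue that the non-Cauchy behavior of the Ces{\`a}ro means is already detected on $U$.

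First I would fix a countable dense subset $D=\{u_n\}_{n=1}^{\infty}$ of $U$. Since $U$ has no isolated points, each $u_n$ is a limit point of $D$, so I can extract from $D$ two disjoint sequences $t_n,t'_n\in U$ each of which is dense in $U$ (hence dense in $\cl U$); the hypothesis that every $u\in U$ has a countable neighborhood base guarantees that closures and convergence inside $U$ are governed by sequences, which is what makes the extraction and the later oscillation argument work. The point of keeping the two sequences inside the open set $U$, rather than asking for density in all of $K$, is that $U$ being open lets me separate points of $D$ by functions supported near $U$.

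Next I would mimic the construction in the proof of Theorem \ref{Komprop4}. For each $k$ I define $f_k$ on the finite set $\{t_n\}_{n=1}^{k}\cup\{t'_n\}_{n=1}^{k}$ by $f_k(t_i)=1$ and $f_k(t'_i)=0$ for $i\le k$, and then extend $f_k$ continuously to all of $K$ with values in $[0,1]$; such an extension exists by the Tietze extension theorem, since $K$ is compact Hausdorff, hence normal, and the prescribed finite set is closed. These $f_k$ lie in the order interval $[0,\mathbb{1}]\subseteq C(K)$, so any sequence I form from them is order bounded, and $oo$-convergence on an order bounded sequence in $C(K)$ is just uniform convergence.

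Finally, for an arbitrary subsequence $f_{k_j}$ I would examine the Ces{\`a}ro means $g_m=\frac{1}{m}\sum_{j=1}^{m}f_{k_j}$ and show they are not $oo$-Cauchy. The key step, which I expect to be the main obstacle, is to locate, for each tail, a point of $U$ where two far-apart means differ by a fixed amount. Given $m<m'$, I use the density of $\{t_n\}$ and $\{t'_n\}$ in $U$ together with the countable-base hypothesis: I can pick an index beyond all the $k_j$ with $j\le m'$ whose corresponding $t$-point and $t'$-point are close enough in $U$ (by density and the first-countability, approximated by a single $u\in U$) that the later means $g_{m'}$ take values near $1$ while $g_m$ takes values near $0$ there, or vice versa, producing a uniform gap bounded below independently of $m$. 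This forces $\sup_{t\in U}\abs{g_{m'}(t)-g_m(t)}$ to stay bounded away from $0$, so $g_m$ is not uniformly Cauchy, hence not $oo$-Cauchy. Since the subsequence was arbitrary, $C(K)$ fails the $oo$-pre-Koml{\'o}s property.
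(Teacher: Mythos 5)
Your construction is sound up to (and including) the Tietze step, and it is essentially the paper's approach: the paper also extracts two disjoint sequences dense in $U$ (via a diagonal choice over nested neighborhood bases of the points of $D$), builds the oscillating functions of Theorem \ref{Komprop4} — only it does so in $C(\cl U)$ and then extends to $C(K)$, whereas you build them directly on $K$; that difference is immaterial. The fatal problem is your final step. You assert that for order bounded sequences in $C(K)$, $oo$-convergence is ``just uniform convergence,'' and you conclude ``not uniformly Cauchy, hence not $oo$-Cauchy.'' This equivalence is false, and the implication you need goes the wrong way: in $C(K)$, uniform convergence implies order convergence, but not conversely. For example, the tent functions $h_m(t)=\max(0,1-mt)$ in $C[0,1]$ lie in $[0,\mathbb{1}]$ and satisfy $h_m\downarrow 0$ (any continuous lower bound is $\le 0$ on $(0,1]$, hence everywhere), so $h_m\oc 0$, yet $\|h_m\|_\infty=1$ for all $m$. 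The point is that $y_n\downarrow 0$ in the lattice $C(K)$ forces $y_n(t)\to 0$ only on a dense set of points, not everywhere and certainly not uniformly. So establishing that $\sup_{t\in U}|g_{m'}(t)-g_m(t)|$ stays bounded away from $0$ proves strictly less than what the theorem requires: a sequence can fail to be uniformly Cauchy and still be $oo$-Cauchy.

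What is genuinely missing is an argument that rules out \emph{every} dominating sequence, and this is where the density of both sequences in $U$ and the openness of $U$ must actually be used. Suppose $|g_m-g_{m'}|\le y_n$ for all $m,m'\ge n$ with $y_n\downarrow 0$ in $C(K)$. For each fixed $i$ one has $g_m(t_i)\to 1$ and $g_m(t'_i)\to 0$ as $m\to\infty$ (only finitely many $j$ have $k_j<i$, and this is independent of how the $f_k$ were extended), so letting $m'\to\infty$ gives $1-g_n(t_i)\le y_n(t_i)$ and $g_n(t'_i)\le y_n(t'_i)$ for all $i,n$. Hence the continuous function $g_n+y_n$ is $\ge 1$ on $\{t_i\}$, which is dense in $U$, so $g_n+y_n\ge 1$ on all of $U$; evaluating at $t'_i\in U$ and using $g_n(t'_i)\le y_n(t'_i)$ yields $y_n(t'_i)\ge \frac{1}{2}$, and by density again $y_n\ge\frac{1}{2}$ on $U$ for every $n$. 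Since $U$ is open and nonempty, Urysohn's lemma gives $h\in C(K)$ with $0\le h\le\frac{1}{2}\mathbb{1}$, $h=0$ outside $U$ and $h\ne 0$; then $0<h\le y_n$ for all $n$, contradicting $y_n\downarrow 0$. (Your extraction of the two disjoint dense sequences is merely asserted rather than carried out, but that claim is true and is exactly the paper's nested-neighborhood construction, so it is a presentational gap; the reduction of $oo$-Cauchyness to uniform Cauchyness, by contrast, is a genuine error that invalidates the proof as written.)
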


\begin{proof}
Let $D=\{d_k\}_{k=1}^{\infty}$ be a countable dense subset of a nonempty open subset $U\subset K$ without isolated points.
Without lost of generality, we may choose countable neighborhood bases $B_d=\{U_d^n\}_{n=1}^{\infty}$ of elements $d\in D$ such that 
$$
  U_d^{n+1}\subseteq U_d^n \ \ \ \ (\forall d\in D, n\in\mathbb{N})
$$ 
and $d_m\not\in U_{d_k}^k$ for $m<k$.

We choose a sequence $\{d_{n_k}\}_{k=1}^{\infty}$ of distinct elements of $D$ as follows
$$
  d_{n_1}\in U_{d_1}^1, d_{n_2}\in U_{d_1}^2, d_{n_3}\in U_{d_2}^2, d_{n_4}\in U_{d_1}^3, d_{n_5}\in U_{d_2}^3, d_{n_6}\in U_{d_3}^3, ... \ .
$$ 
It is an easy exercise to show that $\cl\{d_{n_{2k-1}}\}_{k=1}^{\infty}=\cl\{d_{n_{2k}}\}_{k=1}^{\infty}=\cl U$.

By Theorem \ref{Komprop4}, there is a sequence $f_k\in\cl U$ such that $f_k(\cl U)\subseteq [0,1]$ for all $k$ with the property that 
for any subsequence $f_{k_j}$ the sequence 
$$
  g_m=\frac{1}{m}\sum\limits_{j=1}^{m}f_{k_j} \ \ \ (m\in\mathbb{N})
$$
of Ces{\'a}ro means is not $oo$-Cauchy in $C(\cl U)$. Now, extend $f_k$ to $\bar{f}_k\in C(K)$ so that $\bar{f}_k(K)\subseteq [0,1]$ for all $k$.
Clearly, for any subsequence $\bar{f}_{k_j}$ the sequence 
$$
  y_m=\frac{1}{m}\sum\limits_{j=1}^{m}\bar{f}_{k_j} \ \ \ (m\in\mathbb{N})
$$
is not $oo$-Cauchy in $C(K)$.
\end{proof}

\begin{cor}\label{Komprop6}
For a compact metric space $K$ possessing a nonempty separable open subset without isolated points, 
the Banach lattice $C(K)$ fails the $oo$-pre-Koml{\'o}s property.
\end{cor}

Note that $\ell_{\infty}\cong C(\beta{\mathbb{N}})$ has $oo$-Koml{\'o}s property (cf. e.g., \cite[Ex.5.11]{GTX})
From the other hand side, \cite[Ex.5.2]{GTX} implies that $\ell_{\infty}(\Gamma)$ fails the $oo$-pre-Koml{\'o}s property, 
whenever $\mathrm{Card}(\Gamma)\ge c$.

\section{Koml{\'o}s sets}

The converse of the Koml{\'o}s Theorem \ref{Koml} has been proved in \cite[Thm.2.1]{Len}, namely
Lennard has proved that: {\em for a probability measure $\mu$, every convex $C\subset L_1(\mu)$ must be norm bounded
provided that $C$ satisfies the property: for every sequence $x_n$ in $C$ there exist 
a subsequence $x_{n_k}$ and an $x\in E$, with 
$$
  x=uo-\lim\limits_{m\to\infty}\frac{1}{m}\sum\limits_{j=1}^{m}x_{n_{k_j}}
$$
for any subsequence $x_{n_{k_j}}$ of $x_{n_k}$}. Subsets of any Banach lattice $E$ satisfying
above property are called {\em Koml{\'o}s sets} in \cite[Def.5.22]{GTX}. This motivates the 
following definition.

\begin{defi}\label{Komset1}
We say that $C\subset E$ is an $o$-, $uo$-, $n$, or $un$-{\em Koml{\'o}s set} $($respecti\-ve\-ly, $o$-, $uo$-, $n$-, 
or $un$-pre-{\em Koml{\'o}s set}$)$ if, for every sequence $x_n$ in $C$, there exist a subsequence $x_{n_k}$ and $x\in E$ such that, 
for any further subsequence $x_{n_{k_j}}$, the sequence $g_m=\frac{1}{m}\sum\limits_{j=1}^{m}x_{n_{k_j}}$ is $o$-, $uo$-, $n$-, 
or $un$-convergent to $x$ $($respectively, $g_m$ is $o$-, $uo$-, $n$-, or $un$-Cauchy$)$.
\end{defi}

The main result of paper \cite{GTX} concerning Koml{\'o}s sets \cite[Thm.5.23]{GTX} can be read as follows.

\begin{thm}[Gao--Troitsky--Xanthos]\label{Komset2}
Let $E$ be a Banach lattice with the projection property. If $E_{oc}\tilde{}$ is a norming subspace of $E^*$, 
then any convex $uo$-Koml{\'o}s set $C$ in $E$ is norm bounded.
\end{thm}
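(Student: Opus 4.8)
The plan is to prove the contrapositive: assuming the convex set $C$ is not norm bounded, I would manufacture a single strictly positive order continuous functional detecting this unboundedness, reduce to an $L_1$-space, and invoke Lennard's converse \cite[Thm.2.1]{Len} quoted above. First I would pick $x_n\in C$ with $\|x_n\|\ge 4^n$. Since $E_{oc}^{\sim}$ is norming, choose $\varphi_n\in E_{oc}^{\sim}$ with $\|\varphi_n\|\le 1$ and $\abs{\varphi_n(x_n)}\ge\tfrac12\|x_n\|$; replacing $\varphi_n$ by $\pm\varphi_n$ and passing to its positive part (legitimate because $E_{oc}^{\sim}$ is a band, hence closed under $(\cdot)^+$), I may assume $\varphi_n\in (E_{oc}^{\sim})_+$, $\|\varphi_n\|\le1$, and $\varphi_n(x_n)\ge\tfrac12\|x_n\|$. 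Then $\psi:=\sum_{n}2^{-n}\varphi_n$ lies in $(E_{oc}^{\sim})_+$ (the series converges in $E^*$ and $E_{oc}^{\sim}$ is norm closed). From $\psi\ge 2^{-n}\varphi_n$ I obtain $\psi(\abs{x_n})\ge 2^{-n}\varphi_n(\abs{x_n})\ge 2^{-n}\varphi_n(x_n)\ge 2^{-n-1}\|x_n\|\ge 2^{n-1}\to\infty$, so $\psi$ already witnesses the unboundedness.

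Next I would pass to the carrier band (null-complement) $B$ of $\psi$. Using the projection property, let $P$ be the band projection onto $B$; then $\psi$ is strictly positive and order continuous on $B$, and $\psi(\abs{Py})=\psi(\abs{y})$ for all $y$, since $\psi$ vanishes on $B^d$. Band projections preserve $uo$-convergence and commute with Ces{\'a}ro means, and $P$ is linear, so $P(C)$ is a convex $uo$-Koml{\'o}s subset of $B$ (lift any sequence in $P(C)$ to $C$, apply the Koml{\'o}s property, and project); moreover $\psi(\abs{Px_n})=\psi(\abs{x_n})\to\infty$, so $P(C)$ remains $\psi$-unbounded. Since $B$ carries the strictly positive order continuous functional $\psi$, the $AL$-representation associated with $\psi$ (as in \cite{GTX}) provides a lattice embedding $j$ of $B$ as a regular sublattice of $L_1(\mu)$ for a probability measure $\mu$, under which $\psi$ becomes $\int\cdot\,d\mu$. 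Because the embedding is regular, $uo$-convergence in $B$ agrees with $uo$-convergence in $L_1(\mu)$ \cite{GTX}, so the image $\hat C:=j(P(C))$ is again convex and $uo$-Koml{\'o}s, now inside $L_1(\mu)$, while $\|j(Px_n)\|_{L_1(\mu)}=\psi(\abs{x_n})\to\infty$ shows that $\hat C$ is not norm bounded.

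Finally I would apply Lennard's converse \cite[Thm.2.1]{Len}, in the $uo$-Ces{\'a}ro form recalled before Definition \ref{Komset1}: every convex $uo$-Koml{\'o}s subset of $L_1(\mu)$ with $\mu$ a probability measure is norm bounded. Applied to $\hat C$, this contradicts $\|j(Px_n)\|_{L_1(\mu)}\to\infty$, and the contradiction forces $C$ to be norm bounded. I expect the main obstacle to lie in the transfer step rather than in the functional bookkeeping: one must verify carefully that the $AL$-representation $j$ realizes $B$ as a \emph{regular} sublattice of $L_1(\mu)$, since only regularity guarantees that $uo$-convergence, and hence the full Koml{\'o}s property including the identity of the limit, is preserved in both directions. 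Combined with the fact that the band projection $P$ does not destroy $uo$-convergence, this is precisely what legitimately reduces the general statement to Lennard's $L_1$-theorem.
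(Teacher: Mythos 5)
A preliminary remark: the paper itself does not prove this theorem --- it is quoted verbatim from \cite[Thm.5.23]{GTX}, and the nearest result the paper does prove, Proposition~\ref{Komset4}, treats the different situation $C\subseteq E_+$ in an arbitrary Banach lattice by a quite different elementary argument. So there is no in-paper proof to compare with, and I assess your argument on its own merits. The outline (norming functional, carrier band via the projection property, $AL$-representation, Lennard's theorem) is the natural reduction and is sound, but as written it contains one slip and one genuine gap. The slip: after the sign flip you have $\varphi_n(x_n)\ge\tfrac12\|x_n\|$ and you replace $\varphi_n$ by $\varphi_n^+$, asserting the estimate survives. Since $C$ is a subset of $E$, not of $E_+$, it need not: in $E=\mathbb{R}^2$ take $x_n=(1,-1)$ and $\varphi_n=(0,-1)$, so that $\varphi_n(x_n)=1=\|x_n\|_\infty$ while $\varphi_n^+=0$, hence $\varphi_n^+(x_n)=\varphi_n^+(|x_n|)=0$. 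The repair is costless: use the modulus instead of the positive part. Because $E_{oc}^{\sim}$ is a band it contains $|\varphi_n|$, one has $\|\,|\varphi_n|\,\|=\|\varphi_n\|\le 1$, and $|\varphi_n|(|x_n|)\ge|\varphi_n(x_n)|\ge\tfrac12\|x_n\|$, which is all that your subsequent chain of inequalities for $\psi$ actually uses.

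The genuine gap is the claim that, since $\psi|_B$ is strictly positive and order continuous, $B$ embeds as a regular sublattice of $L_1(\mu)$ with $\mu$ a \emph{probability} measure --- which is exactly what you need in order to quote \cite[Thm.2.1]{Len}. This is false in general: $\ell_1(\Gamma)$ with $\Gamma$ uncountable carries the strictly positive order continuous functional $x\mapsto\sum_\gamma x_\gamma$, yet admits no lattice embedding into $L_1$ of a finite measure, because such a space contains no uncountable pairwise disjoint family of nonzero positive elements while $\ell_1(\Gamma)$ does. What $\psi|_B$ actually yields is a regular embedding of $B$ into the abstract $AL$-space obtained by completing $B$ in the norm $\psi(|\cdot|)$; to represent that $AL$-space over a probability measure one needs a weak unit, which $B$ need not have. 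The gap is repairable by shrinking the set before representing it: the convex set $\co\{Px_n\}_{n\in\mathbb{N}}$ is contained in $P(C)$, hence is still a convex $uo$-Koml{\'o}s set (the property in Definition~\ref{Komset1} is inherited by subsets), it is still norm unbounded, and it lies in the band $B_0$ generated by the countable family $\{Px_n\}$. That band has a weak unit, e.g. $e=\sum_n 2^{-n}|Px_n|/\|Px_n\|$; $uo$-limits of sequences from $B_0$ remain in $B_0$ (if $y_m\in B_0$, $y_m\uoc x$ and $z\in B_0^d$, then $|x|\wedge|z|\le|x-y_m|\wedge|z|\oc 0$, so $x\in B_0^{dd}=B_0$); and the $AL$-completion of $B_0$ inherits $e$ as a weak unit (regularity plus norm density), hence is lattice isomorphic to $L_1$ of a probability measure after normalization. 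With the modulus fix and this reduction to a countably generated band inserted before the final step, your argument goes through; the remaining ingredients --- preservation of $uo$-convergence and of Ces{\'a}ro means under band projections and under regular embeddings --- are handled correctly in your write-up.
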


Below, in Proposition \ref{Komset4} we show that in arbitrary Banach lattice $E$ every convex $uo$-Koml{\'o}s set $C\subseteq E_+$ is norm bounded.

The following result shows that in \cite[Thm.2.1]{Len} and in \cite[Thm.5.23]{GTX} $uo$-Koml{\'o}s sets can not be replaced by $uo$-pre-Koml{\'o}s set. 

\begin{thm}\label{Komset3}
In any infinite dimensional Banach lattice $E$, there is an unbounded convex $uo$-pre-Koml{\'o}s set $C\subseteq E_+$.
\end{thm}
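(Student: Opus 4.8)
The plan is to build $C$ as the convex hull of a scaled disjoint sequence. Since $E$ is infinite dimensional, it contains a normalized positive disjoint sequence $e_n$ (an infinite disjoint system exists in every infinite dimensional vector lattice), and I set $C=\co\{n e_n:n\in\mathbb N\}$. This is convex and contained in $E_+$, and it is norm unbounded because the vertex $n e_n\in C$ has norm $n\to\infty$. By the remark following Definition \ref{Komprop1} it suffices to test the $uo$-pre-Koml\'os property on sequences in $E_+$, i.e. on sequences drawn from $C$ itself, so the whole problem reduces to controlling Ces\`aro means of convex combinations of the $e_n$.

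Given a sequence $x_i\in C$, first I would write each $x_i=\sum_n \lambda_n^{(i)}\, n e_n$ with finitely supported weights $\lambda_n^{(i)}\ge 0$, $\sum_n\lambda_n^{(i)}\le 1$. Since every scalar sequence $\lambda_n^{(i)}\in[0,1]$ is bounded, a diagonal (Cantor) extraction produces a subsequence along which $\lambda_n^{(i)}\to\ell_n$ for each fixed $n$, with $\sum_n\ell_n\le 1$ by Fatou. The decisive bookkeeping point is that this coordinatewise convergence is inherited by \emph{all} further subsequences and by their Ces\`aro averages: writing $\mu_n^{(m)}=\tfrac1m\sum_{j=1}^m\lambda_n^{(i_{k_j})}$, one has $\mu_n^{(m)}\to\ell_n$ for every $n$ and every further subsequence. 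Hence the $e_n$-coefficient $n\mu_n^{(m)}$ of $g_m=\tfrac1m\sum_{j=1}^m x_{i_{k_j}}$ converges as $m\to\infty$, so $g_m$ stabilizes band by band.

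Next I would prove that $g_m$ is $uo$-Cauchy. Fixing $u\in E_+$ and using disjointness of $e_n$ — which makes $\,\cdot\,\wedge u$ additive over disjoint band-parts while $(A+B)\wedge u\le A\wedge u+B\wedge u$ in general — I split, for each cut-off $N$,
\[
|g_p-g_q|\wedge u=\sum_{n\le N}\bigl(|{\cdot}|e_n\wedge u\bigr)+\Bigl(\sum_{n>N}|\,\cdot\,|\,e_n\Bigr)\wedge u .
\]
The finite head tends to $0$ in order as $p,q\to\infty$, by the coordinatewise convergence of Step 2. For the tail I would invoke the standard fact that a disjoint positive sequence is $uo$-null (see \cite{GTX}), so $e_n\wedge u\oc 0$ and the partial sums $\sum_{n\le N}(e_n\wedge u)\le u$ stay order bounded, and combine this with the mass constraint $\sum_n\mu_n^{(m)}\le 1$, which forces all but finitely many bands to carry negligible weight. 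The conclusion is that $g_m$ is $uo$-Cauchy, for every further subsequence, giving the $uo$-pre-Koml\'os property.

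The hard part will be exactly this tail estimate, because the tail coefficients $n\mu_n^{(m)}$ are \emph{unbounded}, so mere smallness of $\sum_{n>N}(e_n\wedge u)$ does not control $\sum_{n>N} n\mu_n^{(m)}\,(e_n\wedge u)$; the resolution must exploit that meeting with the fixed $u$ caps each band's contribution together with $\sum_n\mu_n^{(m)}\le 1$. I expect a genuine obstruction in lattices that are not order continuous, such as $C(K)$: there any disjoint $e_n$ has supports accumulating at a point, and the clipped tails need not be order null, so the naive disjoint construction can fail. In that regime I would instead take for $w_n$ a \emph{nested} positive sequence concentrating at a single point — an unbounded sequence with $w_n\uoc 0$ whose meets $w_n\wedge u$ are order null because the exceptional set is nowhere dense — and set $C=\co\{w_n\}$, running the same diagonalization and Cauchy argument. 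Making the tail control uniform across the order continuous and non-order-continuous cases is the crux of the proof.
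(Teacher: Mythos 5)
Your construction and first extraction step are the same as the paper's: the paper takes a disjoint sequence $d_n\in E_+$ with $\|d_n\|=n$ (your $d_n=ne_n$), sets $C=\co\{d_n\}_{n=1}^{\infty}$, which is unbounded, and uses a diagonal argument to pass to a subsequence $x_{i_j}$ whose coefficients converge coordinatewise, $\alpha_n^{i_j}\to\beta_n$. But there is a genuine gap exactly where you flag one: the tail estimate is never carried out, and neither of your proposed resolutions works as stated. Estimating $|g_p-g_q|\wedge u$ head-by-tail inside $E$ runs into the unbounded coefficients $n\mu_n^{(m)}$, as you observe; and your fallback of switching to a ``nested'' sequence in non-order-continuous lattices such as $C(K)$ is a wrong turn --- the disjoint construction works in \emph{every} infinite dimensional Banach lattice, and no case split is needed.

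The idea you are missing is to leave $E$ and do the order estimates in the universal completion $E^u$, which is universally (in particular laterally) complete, so the disjoint sums $u=o\text{-}\sum_{n=1}^{\infty}nd_n$ and $y=o\text{-}\sum_{n=1}^{\infty}\beta_n d_n$ exist there (your $\ell_n$ are the $\beta_n$). The unbounded weights that worry you are then absorbed into the \emph{regulator} $u$ itself: writing $x_{i_j}=\sum_m\alpha_m^{i_j}d_m$, all coefficients lie in $[0,1]$, so for the tail $m\ge n$ one has the trivial bound $|\alpha_m^{i_j}-\beta_m|\le 1\le\frac{m}{n}$, while for the finitely many head terms $m<n$ coordinatewise convergence gives $|\alpha_m^{i_j}-\beta_m|\le\frac{m}{n}$ once $j$ is large; together these yield $|x_{i_j}-y|\le\frac{1}{n}u$ for large $j$, i.e.\ $x_{i_j}\oc y$ in $E^u$ with regulator $u$. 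Since $E$ is order dense, hence regular, in $E^u$, the subsequence $x_{i_j}$ (and every further subsequence) is $uo$-Cauchy in $E$ by \cite[3.12]{GTX}, and then \cite[3.13]{GTX} says that Cesàro means of a $uo$-Cauchy sequence are $uo$-Cauchy --- so the paper never has to analyze the means' coefficients $\mu_n^{(m)}$ at all, which sidesteps your bookkeeping. Note also why your expected $C(K)$ obstruction evaporates: the order-theoretic estimate happens in $E^u$, which is universally complete regardless of any order continuity of $E$, so the ``crux'' of making the tail control uniform is obtained for free by relocating the argument rather than by strengthening it.
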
 

\begin{proof}
By \cite[Cor.3.6, Cor.3.13]{GTX}, any disjoint sequence is a $uo$-Koml{\'o}s set.
Take a disjoint sequence $d_n$ in $E_+$ such that $\|d_n\|=n$ for all $n$.
Let $x_i=\sum\limits_{n=1}^{\infty}\alpha_n^i d_n$ be a sequence in the convex hull $C=\co\{d_n\}_{n=1}^{\infty}$. 
By diagonal argument, it is easy to find a subsequence $x_{i_j}$ satisfying 
$$
  \lim\limits_{n\to\infty}\alpha_n^{i_j}=\beta_n \ \ \ \ (\forall n).
	\eqno(1)
$$ 
By choosing further subsequence, if necessary, we may suppose that  
$$
  |\alpha_n^{i_{j}}-\beta_n|\le 1 \ \ \ \ (\forall j\ge n).
	\eqno(2)
$$
For $u=o-\sum\limits_{n=1}^{\infty} nd_n$, $y=o-\sum\limits_{n=1}^{\infty}\beta_n d_n$ in the universal completion $E^u$ (cf. \cite[Def.7.20]{AB2003}) of $E$,
by using (1) and (2) we get that 
$$
  |x_{i_j}-y|\le \frac{1}{n}u
	\eqno(3)
$$
for big enough $j$. In view of (3), the sequence $x_{i_j}$ $o$-converges to $y=o-\sum\limits_{n=1}^{\infty}\beta_n d_n$ in $E^u$, and hence 
is $uo$-Cauchy in $E$ by \cite[3.12]{GTX}. 
It follows that every further subsequence $x_{i_{j_l}}$ is $uo$-Cauchy, and hence, by \cite[3.13]{GTX}, the sequence 
$g_m=\frac{1}{m}\sum\limits_{l=1}^{m}x_{i_{j_l}}$ of its Ces{\'a}ro means is $uo$-Cauchy as well. 

Thus, $C$ is a norm unbounded convex $uo$-pre-Koml{\'o}s set.  
\end{proof}

\begin{prop}\label{Komset4}
In any Banach lattice $E$ every convex $uo$-Koml{\'o}s set $C\subseteq E_+$ is norm bounded.
\end{prop}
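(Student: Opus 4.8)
The plan is to argue by contradiction, so suppose $C\subseteq E_+$ is convex and $uo$-Koml\'os but \emph{not} norm bounded. The first thing I would record is that a norm unbounded convex subset of the positive cone cannot have an upper bound in $E$: if $C\subseteq[0,u]$ for some $u\in E_+$, then $C$ would be order bounded and hence norm bounded, because order intervals in a Banach lattice are norm bounded. Thus $C$ is order unbounded, and for every $u\in E_+$ there is $c\in C$ with $(c-u)^+\neq 0$. The second ingredient is the identification of $uo$-convergence in $E$ with order convergence in the universal completion $E^u$, exactly the device already used in the proof of Theorem \ref{Komset3} through \cite[3.12]{GTX}: a sequence $g_m$ satisfies $g_m\uoc x$ in $E$ precisely when $g_m\oc x$ in $E^u$.

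The strategy is then to manufacture a single sequence $(w_i)$ in $C$ that defeats the Koml\'os property. Using order unboundedness together with convexity in a gliding-hump fashion, I would inductively select $c_k\in C$ whose successive positive parts, taken modulo the previously accumulated mass, are essentially disjoint and have norms tending to $\infty$, and then pass to renormalised finite convex combinations $w_i=\big(\sum_{k\le i}\gamma_k c_k\big)\big/\sum_{k\le i}\gamma_k\in C$, with the weights $\gamma_k$ tuned so that the increasing sums $S_i=\sum_{k\le i}\gamma_k c_k$ climb out of every order interval of $E$. Since $c_k\ge 0$, the sums $S_i$ increase in $E^u$; if they are order bounded there, they increase to a supremum $D\in E^u_+$, the $w_i$ satisfy $w_i\oc D$ in $E^u$, and hence the Ces\`aro means of \emph{every} subsequence of $(w_i)$ also $o$-converge to the same $D$ in $E^u$, i.e. $uo$-converge to $D$ in $E$. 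By construction $D\notin E$, so no $x\in E$ can serve as the Koml\'os limit, and $C$ is not $uo$-Koml\'os.

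The hard part will be carrying out this construction so that the common limit really lies outside $E$, uniformly over all subsequences, because unboundedness of the convex set can manifest in two qualitatively different ways that must be reconciled. When the excess mass can be split off as a disjoint sequence of norms tending to infinity (as for the convex hull in Theorem \ref{Komset3}), the weights can be arranged so that the disjoint sum $D$ has norm-unbounded partial sums and therefore cannot belong to $E$. When instead the mass concentrates along a fixed direction, say $c_k=\lambda_k u$ with $\lambda_k\to\infty$, no disjointification is available; but then the Ces\`aro scalars blow up, $S_i$ increases to $\infty\cdot u$, which is order unbounded in $E^u$ and has no order limit there at all, so for this $(w_i)$ every subsequence has Ces\`aro means without any $uo$-limit in $E$. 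I would therefore phrase the final step uniformly in terms of the increasing sequence $S_i$ in $E^u$: either $S_i\uparrow D$ with $D\notin E$, or $S_i$ is order unbounded in $E^u$, and in both alternatives no subsequence of $(w_i)$ can have Ces\`aro means $uo$-converging to an element of $E$, contradicting the assumption. Proving that order unboundedness of a convex subset of $E_+$ genuinely forces one of these two alternatives, and executing the disjointification cleanly in an arbitrary Banach lattice without the structural hypotheses of Theorem \ref{Komset2}, is where the real work lies.
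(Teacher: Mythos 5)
Your overall plan (argue by contradiction and manufacture a sequence of convex combinations in $C$ whose Ces\`aro means cannot $uo$-converge to any member of $E$) points in the right direction, but the write-up stops exactly where the mathematics starts. The inductive selection of the $c_k$ (the ``gliding-hump'' disjointification) is never actually defined, and the dichotomy --- either $S_i\uparrow D$ with $D\in E^u\setminus E$, or $S_i$ order unbounded in $E^u$ --- is never proved; you say yourself, twice, that this ``is where the real work lies,'' which is an admission that the core of the proof is missing. Moreover, the second branch of the dichotomy contains an unjustified inference: from order unboundedness of $S_i$ in $E^u$ you conclude that no subsequence of $(w_i)$ has $uo$-convergent Ces\`aro means, but the normalized combinations $w_i=S_i/\Gamma_i$ and their Ces\`aro means are \emph{not} monotone (the normalization destroys monotonicity), so no monotone-limit lemma applies to them directly. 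The inference can be repaired: if the Ces\`aro means $v_m$ of some subsequence satisfied $v_m\uoc x\in E$, then from $v_m\ge u_m:=\Gamma^{-1}\frac{1}{m}\sum_{j\le m}S_{i_j}$ (an increasing, norm-unbounded sequence lying in $E$, assuming your weights are summable with sum $\Gamma$ --- which your first branch tacitly requires anyway, or else $w_i\oc 0$) and the fact that $v_m-u_n\ge 0$ for $m\ge n$ $uo$-converges to $x-u_n$, one gets $u_n\le x$ for all $n$, hence $\|u_n\|\le\|x\|$, a contradiction. But this argument appears nowhere in your text, and once you have it you discover that the dichotomy, the disjointification, and the universal completion are all superfluous.

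The paper's proof runs entirely inside $E$ on exactly this simpler principle. Choose $x_n\in C$ with $\|x_1\|\ge 4$ and $\frac{1}{2^n}\|x_n\|\ge 2\sum_{k=1}^{n-1}\frac{1}{2^k}\|x_k\|$; set $z_n=\sum_{k=1}^{n}\frac{1}{2^k}x_k$, which is increasing with $\|z_n\|\ge 2^n$, and restore membership in $C$ by $y_n=\frac{1}{2^n}x_1+z_n$ (the coefficients now sum to $1$, so $y_n\in C$ by convexity). The only lemma needed is: an increasing sequence in $E_+$ which $uo$-converges must increase to its supremum, hence is order bounded and therefore norm bounded. Consequently no subsequence $z_{n_k}$, and no sequence of its Ces\`aro means (again increasing and norm unbounded), can $uo$-converge; and the Ces\`aro means of any subsequence $y_{n_k}$ differ from those of $z_{n_k}$ by $\frac{1}{m}\sum_{k=1}^{m}\frac{1}{2^{n_k}}x_1\le\frac{1}{m}x_1\oc 0$, so they cannot $uo$-converge either. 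Rapid norm growth plus monotonicity thus does all the work your disjointification and case analysis were meant to do, using nothing about $E$ beyond norm boundedness of order intervals.
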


\begin{proof}
Let $C\subseteq E_+$ be a norm unbounded convex set. We are going to show that $C$ is not $uo$-Koml{\'o}s.

Choose a sequence $x_n\in C$ so that $\|x_1\|\ge 4$ and 
$$
  \frac{1}{2^n}\|x_n\|\ge 2\sum_{k=1}^{n-1}\frac{1}{2^k}\|x_k\| \ \ \ \ (\forall n>1).
$$
Define an increasing sequence $z_n\in E_+$ as follows: 
$$
  z_n=\sum_{k=1}^{n}\frac{1}{2^k}x_k. 
$$  
Clearly, every subsequence $z_{n_k}$ of $z_n$ is not $uo$-convergent, since otherwise $z_{n_k}\uoc z\in E_+$ and $z-z_{n_k}=|z-z_{n_k}|\wedge z\oc 0$, 
which means that $z_{n_k}\uparrow z$, and hence $\|z_{n_k}\|\le\|z\|<\infty$, violating
$$
  \|z_n\|\ge\frac{1}{2^n}\|x_n\|\ge 2\sum_{k=1}^{n-1}\frac{1}{2^k}\|x_k\|\ge 2\|z_{n-1}\|\ge 2^n\ \ \ \ (\forall n\ge 1).
$$ 
The sequence $w_m=\frac{1}{m}\sum\limits_{j=1}^{m}z_{n_k}$ is also increasing and $\|w_m\|\to\infty$. 
The similar argument shows that $w_m$ is not $uo$-convergent. Let
$$
  y_n=\frac{1}{2^n}x_1+z_n\in C\ \ \ \ (\forall n\ge 1).
$$
Since $\frac{1}{m}\sum\limits_{k=1}^{m}\frac{1}{2^{n_k}}x_1\le\frac{1}{m}x_1\oc 0$, the sequence of the Ces{\'a}ro means
$$
  \frac{1}{m}\sum\limits_{k=1}^{m}y_{n_k}=\frac{1}{m}\sum\limits_{k=1}^{m}\frac{1}{2^{n_k}}x_1+\frac{1}{m}\sum\limits_{k=1}^{m}z_{n_k}
	=w_m+\frac{1}{m}\sum\limits_{k=1}^{m}\frac{1}{2^{n_k}}x_1 \ \ \ (m\in\mathbb{N})
$$
is not $uo$-convergent. It shows that $C$ is not $uo$-Koml{\'o}s.
\end{proof}

Note that the similar argument, as in the proof of Proposition \ref{Komset4}, shows that in any Banach lattice 
$E$ every convex $un$-Koml{\'o}s set $C\subseteq E_+$ is norm bounded.

The next result follows directly from Theorem \ref{Komset3} and Proposition \ref{Komset4}. 

\begin{thm}\label{Komset6}
Let $E$ be an infinite dimensional Banach lattice. Then there exists a norm unbounded convex $uo$-pre-Koml{\'o}s set $C\in E_+$ which is not $uo$-Koml{\'o}s.
\end{thm}

\begin{cor}\label{lastcor}
Let $E$ be an Banach lattice. The following conditions are equivalent$:$

$(1)$ $\dim(E)<\infty$$;$

$(2)$ $E$ is $uo$-complete$;$

$(3)$ $E$ is sequentially $uo$-complete$;$

$(4)$ every $uo$-pre-Koml{\'o}s set is $uo$-Koml{\'o}s$;$

$(5)$ every convex $uo$-pre-Koml{\'o}s set $C\in E_+$ is norm bounded.

\end{cor}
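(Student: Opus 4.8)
The plan is to establish the equivalences through two loops, both passing through $(1)$: namely $(1)\Rightarrow(2)\Rightarrow(3)\Rightarrow(1)$ and $(1)\Rightarrow(4)\Rightarrow(5)\Rightarrow(1)$. The guiding idea is that in finite dimension $uo$-convergence degenerates into norm convergence, so all four Koml\'os-type conditions become harmless there, while the two genuinely substantial directions are the contrapositives that must extract a pathology from infinite dimensionality. The two earlier results of this section, Theorem~\ref{Komset3} and Proposition~\ref{Komset4}, will do most of that extraction for me.

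For $(1)\Rightarrow(2)$ I would use that a finite-dimensional Banach lattice is lattice isomorphic to $\mathbb{R}^d$ with the coordinatewise order, where $o$-convergence is coordinatewise convergence. Since $\mathbb{R}^d$ has an order unit, testing $\abs{x_\alpha-x_\beta}\wedge u\oc 0$ against large $u$ shows a $uo$-Cauchy net to be coordinatewise Cauchy; as all norms on $\mathbb{R}^d$ are equivalent, it is norm-Cauchy, hence convergent, and the limit is also the $uo$-limit. Thus $E$ is $uo$-complete. The step $(2)\Rightarrow(3)$ is immediate, since every sequence is a net.

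The implication $(3)\Rightarrow(1)$ I would prove by contraposition, and I expect this to be the main obstacle. If $\dim(E)=\infty$, pick a disjoint sequence $d_k\in E_+$ with $\|d_k\|=1$ and set $x_n=\sum_{k=1}^{n}k\,d_k\in E_+$. This sequence is increasing with $\|x_n\|\ge n$, so it cannot $uo$-converge: an increasing $uo$-limit $y$ would satisfy $y\ge x_n$ for all $n$ and hence bound $\|x_n\|$. On the other hand, for each $u\in E_+$ disjointness and finite distributivity give $(x_n-x_m)\wedge u=\bigl(\bigvee_{k=m+1}^{n}k d_k\bigr)\wedge u=\bigvee_{k=m+1}^{n}(k d_k\wedge u)$; the sequence $(k d_k\wedge u)_k$ is disjoint and order bounded by $u$, hence $o$-null, so it is dominated by some $y_k\downarrow 0$, whence $(x_n-x_m)\wedge u\le y_{m+1}\downarrow 0$. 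As in the proof of Theorem~\ref{Komset3} (where the $uo$-Cauchy property is verified in the universal completion $E^u$), this shows $x_n$ is $uo$-Cauchy. So $E$ is not sequentially $uo$-complete, and the delicate point is precisely this verification of $uo$-Cauchyness for an unbounded disjoint series.

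For the second loop I would argue $(1)\Rightarrow(4)$ again in $\mathbb{R}^d$, this time avoiding any interlacing or limit-uniqueness argument. First, a $uo$-pre-Koml\'os set $C$ must be norm bounded: otherwise, by a fast-growth choice of $x_n\in C$ as in Proposition~\ref{Komset4}, the Ces\`aro means of every subsequence have norms tending to $\infty$ and so are not $uo$-Cauchy. Once $C$ is bounded, any sequence in it has, by Bolzano--Weierstrass, a norm-convergent subsequence $x_{n_k}\to x$; then every further subsequence also converges to $x$, so its Ces\`aro means converge in norm, i.e. in $uo$, to the \emph{same} $x$, and $C$ is $uo$-Koml\'os. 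The step $(4)\Rightarrow(5)$ is then immediate from Proposition~\ref{Komset4}: a convex $uo$-pre-Koml\'os set $C\subseteq E_+$ is $uo$-Koml\'os by $(4)$, hence norm bounded. Finally $(5)\Rightarrow(1)$ is the contrapositive of Theorem~\ref{Komset3}: in infinite dimension there is an unbounded convex $uo$-pre-Koml\'os set in $E_+$, contradicting $(5)$. Combining the two loops yields the equivalence of $(1)$ through $(5)$.
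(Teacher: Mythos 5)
Your proof is correct, but its architecture genuinely differs from the paper's. The paper closes one cycle $(1)\Rightarrow(2)\Rightarrow(3)\Rightarrow(4)\Rightarrow(5)\Rightarrow(1)$: it dismisses $(1)\Rightarrow(2)\Rightarrow(3)$ as trivial, asserts that $(3)\Rightarrow(4)$ ``easily follows from Definition~\ref{Komset1}'', and obtains $(4)\Rightarrow(5)$ from Proposition~\ref{Komset4} and $(5)\Rightarrow(1)$ from Theorem~\ref{Komset3} --- these last two steps coincide exactly with your second loop. What you do differently is to delete the link $(3)\Rightarrow(4)$ and instead close two separate loops through $(1)$, which forces you to supply two arguments that appear nowhere in the paper: the contrapositive $(3)\Rightarrow(1)$, via the increasing sequence $x_n=\sum_{k=1}^{n}k\,d_k$ built over a normalized disjoint sequence (uo-Cauchy because a disjoint order-bounded sequence is $o$-null --- the uo-nullity of disjoint sequences is \cite[Cor.3.6]{GTX}, the same fact behind Theorem~\ref{Komset3} --- and not uo-convergent because an increasing sequence can only uo-converge to its supremum, the very observation used inside the proof of Proposition~\ref{Komset4}); and $(1)\Rightarrow(4)$, via Bolzano--Weierstrass after a fast-growth argument showing that uo-pre-Koml\'os sets in $\mathbb{R}^d$ are norm bounded (legitimate there, since uo-Cauchy equals norm-Cauchy in finite dimensions). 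The trade-off is worth recording: the paper's route is shorter, but its step $(3)\Rightarrow(4)$ tacitly requires that, once sequential uo-completeness upgrades the Ces\`aro means of \emph{every} further subsequence from uo-Cauchy to uo-convergent, all of these limits coincide --- the single $x$ demanded by Definition~\ref{Komset1}; this Buck-type limit-uniqueness point is not discussed in the paper. Your route sidesteps it entirely (as you explicitly intend), because in $\mathbb{R}^d$ extracting a norm-convergent subsequence first pins down the common limit, and as a by-product you obtain a sharper standalone fact not isolated in the paper: no infinite-dimensional Banach lattice is sequentially uo-complete, with an explicit witness.
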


\begin{proof}
Implications $(1)\Rightarrow(2)\Rightarrow(3)$ are trivial, and $(3)\Rightarrow(4)$ easily follows from Definition \ref{Komset1}.

$(4)\Rightarrow(5)$: Let $C\in E_+$ be a convex $uo$-pre-Koml{\'o}s set. Then $C$ is $uo$-Koml{\'o}s by the assumption.
Proposition \ref{Komset4} ensures that $C$ is norm bounded.

$(5)\Rightarrow(1)$: It is exactly Theorem \ref{Komset3}.
\end{proof}


\begin{thebibliography}{99}
\normalsize		
\bibitem{AB2003}
C. D. Aliprantis and O. Burkinshaw, 
{\em Locally solid Riesz spaces with applications to economics},
second ed., Mathematical Surveys and Monographs, Vol. 105, 
American Mathematical
Society, Providence, RI, 2003.

\bibitem{AB}
C. D. Aliprantis and O. Burkinshaw, 
{\em Positive operators}, 
Springer, Dordrecht, 2006, Reprint
of the 1985 original.

\bibitem{DEM}
Y. A. Dabboorasad, E. Y. Emelyanov, and M. A. A. Marabeh, 
{\em um-Topology in multi-normed vector lattices},
\textbf{Positivity} (to appear).

\bibitem{DOT}
Y. Deng, M. O'Brien, and V. G. Troitsky, 
{\em Unbounded Norm Convergence in Banach Lattices}, 
\textbf{Positivity} 21 (2017), 963-974.

\bibitem{GTX}
N. Gao, V. G. Troitsky, and F. Xanthos, 
{\em Uo-convergence and its applications to Ces{\'a}ro means in Banach lattices}, 
\textbf{Israel Journal of Math.} 220 (2017), 649--689. 

\bibitem{KMT}
M. Kandi\'c, M. A. A. Marabeh, and V. G. Troitsky, 
{\em Unbounded norm topology in Banach lattices}, 
\textbf{J. Math. Anal. Appl.} 451 (2017), 259--279.

\bibitem{Kom}
J. Koml{\'o}s, 
{\em A generalization of a problem of Steinhaus}, 
\textbf{Acta Math. Acad. Sci. Hungar.} 18 (1967), 217--229.	

\bibitem{Len}
C. Lennard, 
{\em A converse to a theorem of Koml{\'o}s for convex subsets of $L_1$}, 
\textbf{Pacific J. Math.} 159 (1993), 75--85. 

\end{thebibliography}
\end{document}